\theoremstyle{plain}                    
\newtheorem{theorem}{Theorem}[section]
\newtheorem{lemma}[theorem]{Lemma}
\newtheorem{proposition}[theorem]{Proposition}
\theoremstyle{definition}
\newtheorem{example}[theorem]{Example}
\newtheorem{question}{Question}
\theoremstyle{remark}
\newtheorem{remark}[theorem]{Remark}
\numberwithin{equation}{section}
\newcommand{\zz}{\mathbb Z}
\newcommand{\rr}{\mathbb R}
\newcommand{\rp}[1]{\mathbb{RP}^{#1}}
\newcommand{\cp}[1]{\mathbb{CP}^{#1}}
\newcommand{\pglr}{\mathrm{PGL}(n+1,\rr)}
\newcommand{\conf}{\text{M\"ob}(n)} 
\newcommand{\dev}{\mathrm{dev}}
\newcommand{\confsphere}{\mathbb S}
\newcommand{\wu}{\operatorname{SU}(3)/\operatorname{SO}(3)}
\newcommand{\hyperbolize}[1]{\mathcal H(#1)}
\newcommand{\cd}{h} 
\newcommand{\cat}[1]{\operatorname{CAT}(#1)}
\newcommand{\lk}[1]{\operatorname{lk}\left(#1\right)}
\begin{document}
\title[Manifolds without real projective or flat conformal structures]{Manifolds without real projective \\ or flat conformal structures}

\author{Lorenzo Ruffoni}
\address{Department of Mathematics - Tufts University, 117 College Avenue, Medford, MA 02155, USA}
\email{lorenzo.ruffoni2@gmail.com}

\date{\today}

\subjclass[2020]{57N16, 20F67, 53C23}
 \keywords{Real projective structure; flat conformal structure; spherical CR structure; strict hyperbolization; relative hyperbolization; Gromov hyperbolic group; characteristic classes.}

\begin{abstract}
In any dimension at least five we construct examples of closed smooth manifolds with the following properties: 1) they have neither real projective nor flat conformal structures; 2) their fundamental group is a non-elementary Gromov hyperbolic group. These examples are obtained via relative strict hyperbolization.
\end{abstract}

\maketitle

\section{Introduction}
\addtocontents{toc}{\protect\setcounter{tocdepth}{2}}

The uniformization problem for smooth manifolds consists roughly speaking in the following question: given a smooth manifold $M$, can we endow $M$ with a locally homogeneous geometric structure?
Here, by a \textit{locally homogeneous geometric structure} we mean a structure that locally looks like the geometry of a homogeneous space $X$ for some Lie group $G$ (see \S\ref{sec:gxstructures} and \cite{GO10} for details).
For example, a Riemannian metric of constant sectional curvature provides a geometric structure locally modelled on the hyperbolic space, the Euclidean space, or the round sphere, depending on the sign of the curvature.
It is classical that every surface admits a metric of constant curvature, depending on the sign of its Euler characteristic.
But in higher dimension the topology of a manifold can obstruct the existence of constant curvature metrics. 
So, it is natural to look for generalizations, and locally homogeneous geometric structures provide a natural framework for this quest.

In this paper we are mainly interested in two types of geometric structures.
A \textit{real projective structure} is a locally homogeneous geometric structure modelled on the real projective space $\rp n$ and its group of real projective transformations $\pglr$. 
These are also known as \textit{flat projective structures} in the literature. 
A \textit{flat conformal structure} is a locally homogeneous geometric structure modelled on the sphere $\confsphere^ n$ and its group of M\"obius transformations $\conf$ (also known as conformal  transformations).
Notice that in both cases the geometry is not defined by a  Riemannian metric, but rather by the action of a Lie group, i.e. these should be thought as geometric structures in the sense of Ehresmann and Thurston.
Given the more general framework, one could ask if every manifold can be equipped with a real projective or flat conformal  structure.
In the positive direction, note that the constant curvature geometries admit models inside the real projective geometry of $\rp n$, and also inside the flat conformal geometry of the sphere $\confsphere ^n$. 
In particular, any surface has such a structure.
Moreover, many $3$-manifolds do as well (see \cite{SC83,MO97,MA98,KA89,BDL18}).

However, there are 3-manifolds that do not support this kind of geometric structures.
Goldman in \cite{GO83} provided the fist examples of $3$-manifolds without any flat conformal structure, and then Cooper and Goldman in \cite{CG15} gave the first example of a $3$-manifold without real projective structures.
The examples in \cite{GO83} are obtained as circle bundles over a torus, or as torus bundles over the circle. They admit $Nil^3$ or $Sol^3$ geometry, hence they admit real projective structures (see \cite{MO97}).
The example in \cite{CG15} is obtained as the connected sum $\rp 3 \# \rp 3$, and it follows from \cite{KU78} that it admits a flat conformal structure.

In higher dimension there is a simple way to produce examples of manifolds without real projective or flat conformal structures.
Indeed, a closed simply connected manifold admitting such a structure must be a sphere (because the developing map must be a covering map). 
But for $n\geq 4$ there are closed simply connected $n$--manifolds which are not spheres (e.g. $\cp n$ for $n\geq 2$).
So, the problem is to construct examples with infinite fundamental group.
For the real projective case, \c Coban (see  \cite{C21}) has generalized to any dimension $n\geq 4$  the work of Cooper and Goldman in \cite{CG15}. All these examples are closed smooth manifolds with fundamental group the infinite dihedral group $\zz_2  \ast \zz_2$.
\c Coban also provided examples with fundamental group $\zz$ in \cite{C19}.
To our knowledge, these are all the known examples of manifolds with no real projective structure.
Their fundamental groups are small, in the sense that they are virtually cyclic.
Similarly, we note that also the fundamental groups of the manifolds with no flat conformal structures provided by Goldman in \cite{GO83} and Maier in \cite{MA98} are small, in the sense that they are solvable. 
On the other hand, Kapovich in \cite{KA04} has shown how to obtain $4$-manifolds with no flat conformal structure and arbitrary fundamental group. 

\bigskip
The purpose of this paper is to construct new examples of higher dimensional manifolds that have neither  real projective nor flat conformal structures and have ``large'' fundamental group. 
More precisely, we prove the following.

\begin{theorem}\label{maintheorem}
For all $n\geq 5 $ there exist a closed smooth $n$--manifold $N$ such that
\begin{enumerate}
    \item $N$ has no real projective structure.
    \item $N$ has no flat conformal structure.
    \item $\pi_1(N)$ is a non-elementary Gromov hyperbolic group.

\end{enumerate}
\end{theorem}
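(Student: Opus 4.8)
The plan is to decouple the two requirements: I would kill the geometric structures by a characteristic class obstruction, and produce the large group by hyperbolization. The key lemma I would isolate first is: \emph{if a closed manifold $M$ admits a real projective structure or a flat conformal structure, then all of its rational Pontryagin classes $p_i(M)\in H^{4i}(M;\qq)$ vanish.} For the flat conformal case this rests on the identification of a $(\conf,\confsphere^n)$--structure with a (locally) conformally flat conformal class, whose Weyl tensor vanishes; since the Chern--Weil representatives of the $p_i$ can be written through the conformal Weyl curvature, they represent zero. For the real projective case the argument is parallel: a real projective structure is a flat $(\pglr,\rp n)$--structure, so the corresponding projective Weyl curvature vanishes and the same Chern--Weil reasoning applies. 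Granting this lemma, it suffices to build, for every $n\geq 5$, a closed smooth $n$--manifold $N$ with $\pi_1(N)$ non-elementary Gromov hyperbolic and with some $p_i(N)\neq 0$.

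Next I would fix a seed carrying a nonzero Pontryagin class. The cleanest choice is $\cp 2$, for which $\langle p_1,[\cp 2]\rangle=3\neq 0$. For the target dimension I take $W=\cp 2\times F^{\,n-4}$ with $F$ a convenient closed $(n-4)$--manifold (e.g.\ a torus), so that $p_1(W)\neq 0\in H^4(W;\qq)$ is pulled back from the first factor, and I mark the embedded copy $\Sigma=\cp 2\times\{pt\}$, which has trivial normal bundle.

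The heart of the construction is relative strict hyperbolization applied so as to retain $\Sigma$. Concretely I would remove a tubular neighborhood $\nu(\Sigma)$, strictly hyperbolize the manifold-with-boundary $M_0=W\setminus\nu(\Sigma)$ relative to $\partial M_0$, and reglue $\nu(\Sigma)=\cp 2\times D^{\,n-4}$, obtaining $N=\hyperbolize{M_0}\cup_\partial\nu(\Sigma)$. Because $N$ agrees with $\nu(\Sigma)$ near $\Sigma$, one has $TN|_\Sigma=T\cp 2\oplus\rr^{\,n-4}$, so $\langle p_1(N),[\Sigma]\rangle=\langle p_1(\cp 2),[\cp 2]\rangle=3\neq 0$; this single local computation simultaneously shows $[\Sigma]\neq 0\in H_4(N;\qq)$ and $p_1(N)\neq 0$, whence $N$ admits neither a real projective nor a flat conformal structure. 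On the group side, relative hyperbolization makes $\pi_1(\hyperbolize{M_0})$ hyperbolic relative to the peripheral subgroups carried by $\partial M_0$, and regluing the simply connected piece $\nu(\Sigma)$ fills these peripherals; since $\cp 2\times D^{\,n-4}$ is simply connected the filled group is honestly Gromov hyperbolic.

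The hard part will be the fundamental group rather than the characteristic class, which the local computation above makes robust. I must arrange the relative hyperbolization and the regluing so that the peripheral (boundary) subgroups are tame enough that the Dehn-filling-type quotient produced by gluing in $\nu(\Sigma)$ stays Gromov hyperbolic, and then certify that the outcome is \emph{non-elementary}, i.e.\ neither finite nor two-ended: for this I would check that the hyperbolized region is genuinely of dimension $\geq 2$ and contributes a rank-two free subgroup, so that $\pi_1(N)$ cannot be virtually cyclic. Making the relative Charney--Davis machinery precise enough to guarantee hyperbolicity after filling while the retained $\cp 2$ keeps $p_1(N)\neq 0$ is the one place where genuine work is required; the obstruction lemma and the bundle computation are then formal.
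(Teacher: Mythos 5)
Your plan has a genuine gap at $n=6$, and it sits exactly where you located the ``genuine work'': the fundamental group after regluing. With $F$ a torus and $\Sigma=\cp 2\times\{pt\}$ of trivial normal bundle, the hypersurface along which you glue is $\partial\nu(\Sigma)=\cp 2\times S^{n-5}$. For $n=6$ this is $\cp 2\times S^1$, whose fundamental group is $\zz$; simple connectivity of the filling piece $\cp 2\times D^2$ is not what van Kampen cares about. You get $\pi_1(N)=\pi_1(R_{M_0})/\langle\langle \zz\rangle\rangle$, where $R_{M_0}$ is your relatively hyperbolized piece: the quotient by the normal closure of the \emph{entire} peripheral subgroup. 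No relatively hyperbolic Dehn filling theorem (Osin, Groves--Manning) covers this: those results require the filling kernels to avoid a finite set of nontrivial elements, i.e.\ ``long'' fillings, and killing the whole peripheral is the extreme opposite (compare $F_2=\langle a,b\rangle$, hyperbolic relative to $\langle [a,b]\rangle$: filling the entire peripheral yields $\zz^2$). So for $n=6$ neither hyperbolicity nor non-elementariness of $\pi_1(N)$ is justified. Moreover you cannot escape to codimension one in this dimension: a closed simply connected $5$--manifold $M$ has $H^4(M;\qq)\cong H_1(M;\qq)=0$, so it carries no rational Pontryagin class at all. This is precisely why the paper abandons Pontryagin classes in favor of the immersion obstruction of Propositions~\ref{prop:projective obstruction} and \ref{prop:conformal obstruction}, certified by $w_2\neq 0$ (Lemma~\ref{lem:swclass}): the Wu manifold $\wu$, a rational homology sphere with $w_2\neq0$, then handles $n=6$. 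Within your framework the fix is to drop the trivial-normal-bundle requirement: take $\Sigma=\cp 2\subset\cp 3$ with normal bundle $\mathcal O(1)$, so that $\partial\nu(\Sigma)=S^5$ is simply connected, $M_0\cong D^6$, and $\langle p_1(N),[\Sigma]\rangle=3+1=4\neq0$. For the record, at $n=5$ your construction literally reduces to the paper's (hyperbolize $\cp2\times[0,1]$ rel boundary and reglue the two boundary copies, giving $\pi_1(N)=\pi_1(R_{M_0})\ast\zz$), and for $n\geq 7$ it is a correct variant of it, since there $\cp2\times S^{n-5}$ is simply connected and $\pi_1(N)=\pi_1(R_{M_0})$ is non-elementary hyperbolic by \cite[Theorem 1.1]{BE07}.

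A secondary caveat concerns your obstruction lemma. The conformal half is classical (the paper itself records it, citing \cite{AV70}), but the projective half is not the ``parallel, formal'' statement you suggest. Vanishing of the projective Weyl tensor only normalizes the trace-free part of the curvature: for a general torsion-free connection in the projective class the curvature has the shape $\Omega^i_j=\theta^i\wedge\psi_j+\delta^i_j\,\phi$, and the Pontryagin forms come out as nonzero multiples of powers of the $2$--form $\phi$, which is (a multiple of) the curvature of the induced connection on $\det TM$; they do \emph{not} vanish pointwise. One must either pass to the representative connection preserving a volume density (then $\phi=0$) or argue that $\phi$ is exact because every real line bundle admits a flat connection. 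The statement is true, but it needs this extra argument rather than ``the same Chern--Weil reasoning.'' This is one advantage of the paper's route: restricting a developing map to a lift of a simply connected non-immersable submanifold kills the projective and conformal cases (indeed, any structure modeled on a sphere or a manifold covered by one) in a single stroke, and it runs on mod-$2$ classes, which are available in every dimension $\geq 5$.
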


\noindent The fundamental group of each of the manifolds in Theorem~\ref{maintheorem} is ``large'' in the sense that it has the following features.
\begin{enumerate}
\item Being a non--elementary Gromov hyperbolic group, it is not virtually solvable, and it is SQ-universal (i.e. every countable group embeds into one of its quotients, see \cite{OL95}). 
This in sharp contrast with the examples from  \cite{CG15,C21}), where a key ingredient in the construction is the fact that all the proper quotients of the fundamental group are finite.

\item It retracts to many free subgroups and does not have property (T). This is a by-product of the methods used to construct it (see \S\ref{sec:main} and \cite{BE07}).
\end{enumerate}

We conclude this introduction with some observations and  some questions.
The manifolds in Theorem~\ref{maintheorem} are constructed using the relative version of the strict hyperbolization procedure of Charney and Davis (see \cite{CD95,BE07}), but they are not just the hyperbolization of the manifolds constructed in \cite{CG15,C19,C21}, see Remark~\ref{rem:not strict but rel}.
Indeed, the manifolds in Theorem~\ref{maintheorem} are obtained starting from a triangulation of $M\times [0,1]$, where $M$ is a closed simply connected manifold of dimension $m=n-1$ that is not a sphere (hence the restriction to $n\geq 4$, see Remark~\ref{rem:dimension}).
In particular, since such an $M$ cannot be aspherical, our manifolds are themselves not aspherical.

\begin{question} 
Are there aspherical smooth manifolds with Gromov hyperbolic fundamental group, which admit neither real projective nor flat conformal structures?
\end{question}

We emphasize that our methods actually show that the manifolds in Theorem~\ref{maintheorem} do not admit any locally homogeneous geometric structure modelled on a sphere, or any manifold covered by a sphere. 
In odd dimension this includes for instance spherical CR-structures (see Remark~\ref{rem:obstruction all spheres}). 
From this perspective, the manifolds in Theorem~\ref{maintheorem} are not uniformizable in a very strong sense.
On the other hand, as observed above, the $3$-dimensional examples in \cite{GO83,CG15} admit at least one type of geometric structure.
Finally, the restriction to dimension $n\geq 5$ in Theorem~\ref{maintheorem} is due to the fact that a closed simply connected manifold of dimension $m=2,3$ is a sphere (compare Remark~\ref{rem:dimension}).

\begin{question} 
Are there manifolds of dimension $n=3,4$ as in Theorem~\ref{maintheorem}?
\end{question}

\addtocontents{toc}{\protect\setcounter{tocdepth}{1}}

\subsection*{Acknowledgements}
I would like to thank Igor Belegradek, Jean-Fran\c cois Lafont, and Loring Tu for helpful conversations.
I acknowledge support by the AMS and the Simons Foundation.


\section{Background on locally homogeneous geometric structures}\label{sec:gxstructures}
Let $G$ be a real Lie group and let $X$ be a homogeneous space for $G$. 
Let $M$ be a smooth manifold.
A \textit{locally homogeneous geometric structure modelled on $(G,X)$} (or in short a $(G,X)$--\textit{structure}) on $M$ can be defined in terms of local data, by a collection of local charts on $M$ with values in $X$, such that the change of coordinates is given by elements of $G$. 

Equivalently, a $(G,X)$--structure can be defined in terms of global data, by a representation $\rho:\pi_1(M)\to G$ (the \textit{holonomy representation}) and a $\rho$-equivariant local diffeomorphism $\dev:\widetilde M\to X$ (the \textit{developing map}), where $\widetilde M$ denotes the universal cover of $M$.
To see the connection between the local and the global point of view, one notes that precomposing the  developing map with local inverses to the universal covering projection $\widetilde{M} \to M$ provides an atlas of local charts.
We refer the reader to \cite{TH97,GO10} for details and additional references.

\begin{remark}
We are not imposing any additional conditions on a $(G,X)$-structure, such as being complete, uniformizable, etc. In particular, developing maps are not required to be covering maps on their image, and holonomies are not required to be faithful nor discrete.
\end{remark}

A main source of examples of $(G,X)$-structures comes from Riemannian metrics of constant sectional curvature.
These provide structures modelled on
the hyperbolic space, the Euclidean space, or the round sphere, depending on the sign of the curvature.
More generally, when the point stabilizers for the action of $G$ on $X$ are compact, one can produce a $G$-invariant Riemannian metric on $X$ and on manifolds with a $(G,X)$-structure.
But if points stabilizers are not compact, then the geometric structure is not induced by a Riemannian metric. These are the types of geometric structures we consider in this paper.

Let $\rp n$ be the $n$-dimensional real projective space, and let $\pglr$ denote its group of real projective transformations.
A  \textit{real projective structure}  is a $(\pglr,\rp n)$--structure. This is also known as a \textit{flat projective structure} in the literature.

Let $\confsphere ^n$ be the $n$-dimensional sphere, equipped with the standard conformal structure, and let $\conf$ be its group of conformal transformations (also known as M\"obius transformations). 
A \textit{flat conformal structure}   is a $(\conf,\confsphere ^n)$--structure.
Thanks to Liouville's theorem, in dimension at least $3$, a flat conformal structure can equivalently be defined by a conformal class of Riemannian metrics that are locally conformal to the flat Euclidean metric.

The classical geometries of constant curvature can be realized both inside real projective geometry and inside flat conformal geometry, so they are examples of both.
For instance, hyperbolic geometry can be realized inside $\rp n$ via the Klein model, and inside $\confsphere^n$ via the Poincar\'e disk model.

\subsection{An obstruction}\label{sec:obstruction}

While constructing examples of manifolds with real projective or flat conformal structures is a classical and popular problem, not much is known about how to construct manifolds that do not (see the introduction for an account of the work in this direction).
\c Coban in \cite{C19} has proposed the following obstruction, based on the observation that a developing map for a real projective structure on an $n$--manifold provides in particular a smooth immersion into $\rr^n$ for its simply connected submanifolds.

\begin{proposition}[Theorem 2.1 in \cite{C19}]\label{prop:projective obstruction}
Let $N$ be a smooth $n$--manifold containing a  simply connected  smooth $m$-submanifold $M$ (for some $m<n$) that does not admit any smooth immersion in $\rr^n$. Then $N$ does not admit any real projective structure.
\end{proposition}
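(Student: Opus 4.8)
The plan is to unwind the definition of a real projective structure into the concrete immersion statement it provides on simply connected pieces, and then derive a contradiction from the hypothesis on $M$. Suppose for contradiction that $N$ carries a real projective structure, i.e.\ a $(\pglr,\rp n)$-structure, with developing map $\dev:\widetilde N\to\rp n$ and holonomy $\rho:\pi_1(N)\to\pglr$. The key observation, which I would isolate first, is that $\dev$ is by definition a local diffeomorphism; composing it with local inverses of the universal covering projection yields, on sufficiently small open sets of $N$, smooth charts into $\rp n$ whose transition maps are projective. So $N$ itself inherits an atlas of $\rp n$-valued charts, and it suffices to see that any simply connected submanifold can be immersed in a single affine chart.

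The heart of the argument is the passage from $\rp n$ to $\rr^n$, and this is where I expect the main subtlety to lie. An individual projective chart lands in $\rp n$, not in $\rr^n$, so one cannot immediately read off an immersion into Euclidean space. The point to exploit is that $\rp n$ is covered by affine charts, each isomorphic to $\rr^n$ with transition maps in $\pglr$, and that $M$ is \emph{simply connected}. I would restrict the developing map to (a lift of) $M$: since $M\hookrightarrow N$ and $M$ is simply connected, the inclusion lifts to the universal cover $\widetilde N$, giving a smooth map $M\to\widetilde N$, and composing with $\dev$ produces a smooth local diffeomorphism $g:M\to\rp n$ — in particular a smooth immersion, since $\dim M=m<n$ and a local diffeomorphism into the ambient $\rp n$ restricts to an immersion on the lower-dimensional $M$. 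The remaining task is to upgrade this immersion into $\rp n$ to an immersion into $\rr^n$.

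To do so I would use the standard double cover $\confsphere^n\to\rp n$ together with the fact that $M$ is simply connected: the immersion $g:M\to\rp n$ lifts to an immersion $\widetilde g:M\to\confsphere^n$ (lifting is possible precisely because $\pi_1(M)=0$, so the obstruction to lifting through the two-fold cover vanishes). Since $M$ is compact and of dimension $m<n$, its image $\widetilde g(M)$ misses at least one point of $\confsphere^n$; removing such a point and using stereographic projection $\confsphere^n\setminus\{\mathrm{pt}\}\cong\rr^n$, which is a diffeomorphism, we obtain a smooth immersion $M\to\rr^n$. This contradicts the hypothesis that $M$ admits no smooth immersion in $\rr^n$, and therefore $N$ admits no real projective structure.

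I expect the only genuinely delicate point to be the clean justification that the composite $\dev\circ(\text{lift of }M)$ is an immersion rather than merely a local diffeomorphism of the ambient structure; once one is careful that the developing map is a local diffeomorphism and that its restriction to an embedded submanifold of strictly smaller dimension is automatically an immersion, the lifting steps through the covers $\widetilde N\to N$ and $\confsphere^n\to\rp n$ are routine applications of the simple connectivity of $M$. An alternative, perhaps cleaner, route that avoids one lift would be to observe directly that each projective chart covers $M$ locally and, after passing to affine charts of $\rp n$, one can patch these into a single immersion into $\rr^n$ using a partition of unity and the simple connectivity of $M$ to guarantee global consistency; but the covering-space argument above seems the most economical.
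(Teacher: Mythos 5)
Your argument is correct and follows essentially the same route as the paper: the paper cites \c Coban for this statement and writes out the proof only for its conformal analogue (Proposition~\ref{prop:conformal obstruction}), and that proof is exactly your skeleton --- lift $M$ to $\widetilde N$ using $\pi_1(M)=0$, restrict $\dev$ to get an immersion, observe the image cannot be everything, and delete a point to land in $\rr^n$. The one step you add, lifting the immersion $M\to\rp n$ through the double cover $\confsphere^n\to\rp n$ before removing a point, is precisely what the projective case requires beyond the conformal case (where $\dev$ already lands in the sphere): the complement of a point in $\rp n$ is not $\rr^n$, so this lift cannot be skipped, and your justification of it via $\pi_1(M)=0$ is right. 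Three small repairs: (1) you invoke compactness of $M$ to conclude that the image misses a point, but compactness is not among the hypotheses; the correct justification, as in the paper, is that the image of a smooth map from an $m$-manifold with $m<n$ has measure zero (Sard), hence cannot be all of $\confsphere^n$. (2) The composite $M\to\widetilde N\to\rp n$ is an immersion, not a ``local diffeomorphism'': only the ambient map $\dev$ is a local diffeomorphism, and its restriction to the $m$-dimensional lift of $M$ is an immersion because its differential is injective on every tangent subspace. (3) The alternative you sketch at the end --- patching affine charts by a partition of unity --- would not work (averaging charts destroys immersivity and there is no global consistency to be had this way), but since you offer it only as speculation it does not affect the proof.
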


The proof that \c Coban gave for this result also works in the case of flat conformal structures.
We include the proof for the sake of completeness.
A similar strategy in the study of flat conformal structures was also considered by Kapovich in \cite{KA04}.

\begin{proposition}\label{prop:conformal obstruction}
Let $N$ be a smooth $n$--manifold containing a  simply connected  smooth $m$-submanifold $M$ (for some $m<n$) that does not admit any smooth immersion in $\rr^n$. Then $N$ does not admit any flat conformal structure.
\end{proposition}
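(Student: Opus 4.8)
The plan is to mimic the strategy behind Proposition~\ref{prop:projective obstruction}, using the developing map of a flat conformal structure in place of that of a projective one to manufacture an immersion of $M$ into $\rr^n$, and thereby contradict the hypothesis. So I would argue by contradiction: suppose $N$ carries a flat conformal structure, that is, a $(\conf,\confsphere^n)$--structure, with developing map $\dev\colon\widetilde N\to\confsphere^n$. Recall from \S\ref{sec:gxstructures} that $\dev$ is a $\rho$-equivariant local diffeomorphism, hence in particular a smooth immersion; this is the only property of $\dev$ I will need.

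First I would lift the submanifold to the universal cover. Because $M$ is simply connected, the inclusion $\iota\colon M\hookrightarrow N$ satisfies $\iota_*\pi_1(M)=1$, so the lifting criterion for covering spaces yields a smooth lift $\widetilde\iota\colon M\to\widetilde N$ with $\pi\circ\widetilde\iota=\iota$, where $\pi\colon\widetilde N\to N$ is the covering projection. Since $\pi$ is a local diffeomorphism and $\iota$ is an embedding, the lift $\widetilde\iota$ is again a smooth immersion. Composing, $\dev\circ\widetilde\iota\colon M\to\confsphere^n$ is a smooth immersion of $M$ into the round sphere, being a composite of an immersion with a local diffeomorphism.

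The remaining step, and the only place where the codimension hypothesis $m<n$ is used, is to pass from an immersion into $\confsphere^n$ to one into $\rr^n$. Here I would observe that the image $(\dev\circ\widetilde\iota)(M)$ has measure zero in $\confsphere^n$: covering $M$ by countably many charts, each chart contributes the image of a $C^1$ map from an open subset of $\rr^m$ into $\confsphere^n$ with $m<n$, which has measure zero, and a countable union of null sets is null. Hence there exists a point $p\in\confsphere^n$ outside this image, and stereographic projection $\sigma_p\colon\confsphere^n\setminus\{p\}\to\rr^n$ is a diffeomorphism. Then $\sigma_p\circ\dev\circ\widetilde\iota\colon M\to\rr^n$ is a smooth immersion, contradicting the assumption that $M$ admits no smooth immersion into $\rr^n$.

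I expect the conceptual content to be entirely routine once the structure is set up; the only mildly delicate point, and the crux of the dimension count, is guaranteeing that some point of $\confsphere^n$ is omitted so that stereographic projection applies. This plays exactly the role that the codimension hypothesis plays in Proposition~\ref{prop:projective obstruction}, where the target $\rr^n\subset\rp n$ is already affine; passing through $\confsphere^n$ simply requires this extra measure-zero observation to remove a point.
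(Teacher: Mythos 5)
Your proposal is correct and follows essentially the same route as the paper: lift $M$ to the universal cover via simple connectivity, compose with the developing map to get an immersion into $\confsphere^n$, and use the codimension hypothesis to find an omitted point so that removing it identifies the target with $\rr^n$, contradicting the hypothesis. The only difference is cosmetic: where the paper simply asserts that $\dev(\widetilde M)$ cannot be the entire sphere since $m<n$, you supply the standard measure-zero justification, which is a welcome extra detail rather than a deviation.
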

\begin{proof}
Assume by contradiction that $N$ admits a flat conformal structure. 
Let $\dev:\widetilde N\to \confsphere ^n$ be a developing map for it, and let $\pi: \widetilde N \to N$ be the universal covering projection.
Since $M$ is simply connected, $M$ lifts homeomorphically to $\widetilde N$.
Let $\widetilde M$ be such a lift of $M$ to $\widetilde N$.
The restriction of $\dev$ to $\widetilde M$ provides a smooth immersion of $\widetilde M$ into $\confsphere ^n$.
Since $M$ is a smooth submanifold of dimension $m<n$, $\dev (\widetilde M)$ cannot be the entire sphere, so it is  contained in the complement of a point $p\in \confsphere^n$.
So, the restriction of $\dev$ to $\widetilde M$  provides a smooth immersion of $\widetilde M$ into $\confsphere^n\setminus \{p\}=\rr^n$, which contradicts our assumption.
\end{proof}

\begin{remark}\label{rem:obstruction stable}
The obstruction from Propositions \ref{prop:projective obstruction} and \ref{prop:conformal obstruction} is stable, i.e. it passes to any cover of $N$.
\end{remark}

\begin{remark}\label{rem:obstruction all spheres}
As the reader might have noticed, no role is played here by the group $G$ or the holonomy representation.
Indeed, the same proof holds for any kind of $(G,X)$--structures in which $X$ is the sphere $S^n$, or a manifold covered by the sphere. 
For example, spherical CR geometry, i.e. $X=S^{2n-1}$ and $G=\operatorname{PU}(n,1)$. 
This is the geometry at infinity for complex hyperbolic space. 
Or spherical quaternionic geometry, i.e. $X=S^{4n-1}$ and $G=\operatorname{Sp}(n,1)$. 
This is the geometry at infinity for complex quaternionic space.
See \cite[\S II.10]{BH99} for details. 
(Note: here $S^n$  denotes the standard smooth sphere without any additional structure, while $\confsphere^n$ is reserved for the sphere $S^n$ equipped with its standard conformal structure.)
\end{remark}



\section{The manifolds}
\addtocontents{toc}{\protect\setcounter{tocdepth}{2}}
This section is devoted to the construction of the manifolds in Theorem~\ref{maintheorem}.
The construction is based on some hyperbolization procedures that we review in \S\ref{sec:strict hyperbolization} and \S\ref{sec:relative hyperbolization}. We prove Theorem~\ref{maintheorem} in \S\ref{sec:main}, and construct explicit examples in \S\ref{sec:examples}.

\subsection{Strict hyperbolization}\label{sec:strict hyperbolization}

Gromov introduced some hyperbolization procedures to turn a simplicial complex $K$ into a locally $\cat 0$ space (see \cite{G87,P91,DJ91}).
In \cite{CD95} Charney and Davis refined the construction to obtain a space that is locally $\cat{-1}$. Their construction is known as strict hyperbolization, because it provides a space of strictly negative curvature, as opposed to just non-positive curvature.
If $K$ is a simplicial complex,
we denote by $\hyperbolize K$ the space obtained via strict hyperbolization, and call it the \textit{hyperbolized complex}.

\begin{figure}[h]
\centering
\def\svgwidth{\columnwidth}
\begingroup%
  \makeatletter%
  \providecommand\color[2][]{%
    \errmessage{(Inkscape) Color is used for the text in Inkscape, but the package 'color.sty' is not loaded}%
    \renewcommand\color[2][]{}%
  }%
  \providecommand\transparent[1]{%
    \errmessage{(Inkscape) Transparency is used (non-zero) for the text in Inkscape, but the package 'transparent.sty' is not loaded}%
    \renewcommand\transparent[1]{}%
  }%
  \providecommand\rotatebox[2]{#2}%
  \newcommand*\fsize{\dimexpr\f@size pt\relax}%
  \newcommand*\lineheight[1]{\fontsize{\fsize}{#1\fsize}\selectfont}%
  \ifx\svgwidth\undefined%
    \setlength{\unitlength}{2221.31980115bp}%
    \ifx\svgscale\undefined%
      \relax%
    \else%
      \setlength{\unitlength}{\unitlength * \real{\svgscale}}%
    \fi%
  \else%
    \setlength{\unitlength}{\svgwidth}%
  \fi%
  \global\let\svgwidth\undefined%
  \global\let\svgscale\undefined%
  \makeatother%
  \begin{picture}(1,0.28612687)%
    \lineheight{1}%
    \setlength\tabcolsep{0pt}%
    \put(0,0){\includegraphics[width=\unitlength,page=1]{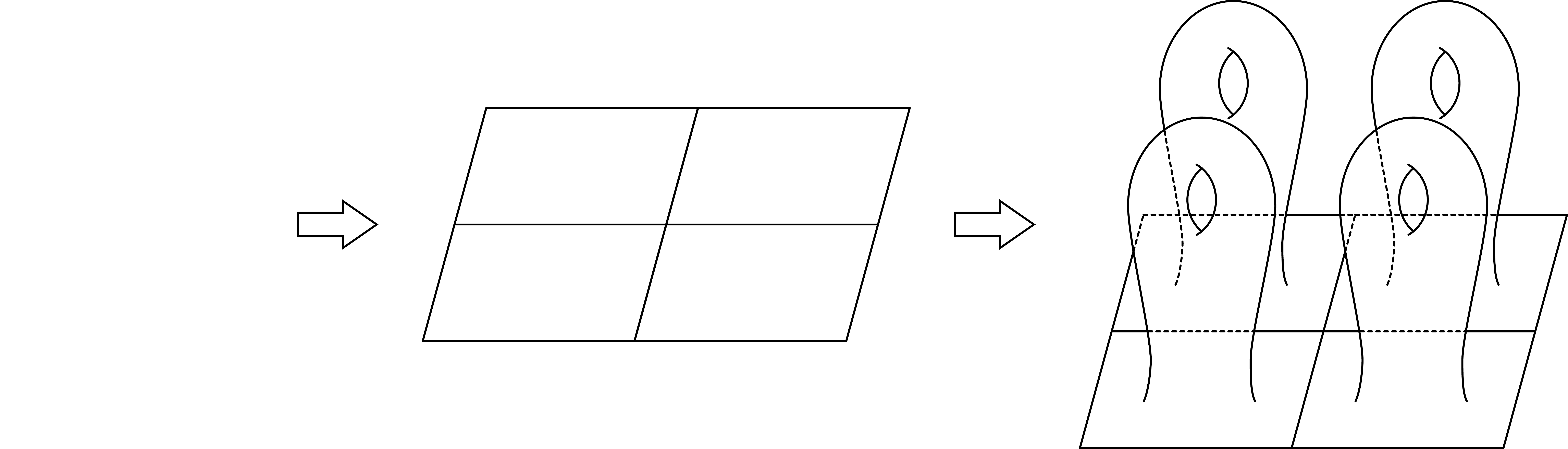}}%
    \put(0.70255649,0.00941637){\color[rgb]{0,0,0}\makebox(0,0)[lt]{\lineheight{1.25}\smash{\begin{tabular}[t]{l}$\hyperbolize K$\end{tabular}}}}%
    \put(0,0){\includegraphics[width=\unitlength,page=2]{hyperbolization_from_simplicial.pdf}}%
    \put(0.06128959,0.07417982){\color[rgb]{0,0,0}\makebox(0,0)[lt]{\lineheight{1.25}\smash{\begin{tabular}[t]{l}$K$\end{tabular}}}}%
  \end{picture}%
\endgroup%

    \caption{A local sketch of the strict hyperbolization procedure $K\mapsto \hyperbolize K$. First each simplex of $K$ is turned into a cubical complex, then every cube is replaced with a certain hyperbolic manifold with boundary and corners to obtain $\hyperbolize K$.}
    \label{fig:hyp}
\end{figure}

The main ingredient in \cite{CD95} is the existence, in each dimension $n$, of a certain compact hyperbolic $n$-manifold  with boundary and corners. 
This manifold is constructed via arithmetic methods.
Roughly speaking, $\hyperbolize K$ is obtained by replacing simplices of $K$ with copies of such hyperbolic manifold (see Figure~\ref{fig:hyp}, and \cite{CD95,BE07,LR22} for details).
This significantly changes the topology of $K$ (e.g. its fundamental group), but in a  controlled way. Indeed, the procedure $K\mapsto \hyperbolize K$ enjoys many properties, and we list the ones relevant for this paper.

\begin{enumerate}
    \item If $K$ is a smoothly triangulated smooth manifold, then so is $\hyperbolize K$.

    \item $\hyperbolize K$ is a locally $\cat{-1}$ and piecewise-hyperbolic cell complex. In particular, $\hyperbolize K$ is aspherical, and if $K$ is compact, then $\pi_1(\hyperbolize K)$ is a (non-elementary) Gromov hyperbolic group.

    \item \label{item:link} There is a continuous map $\cd:\hyperbolize K \to K$ that induces a PL-homeomorphism on the links, i.e. $\lk{p,\hyperbolize K} \cong \lk{\cd(p),K}$ for all $p\in \hyperbolize K$.

\end{enumerate}


\subsection{Relative strict hyperbolization}\label{sec:relative hyperbolization}

The strict hyperbolization procedure described in \S\ref{sec:strict hyperbolization} has a relative version that can be used to work relatively to a chosen subcomplex $L$ of $K$, i.e. without changing its topology.
This construction was introduced in \cite{G87,DJ91,DJW01,CD95}, and we find it convenient to follow the description in  \cite[\S 4]{BE07}.

\begin{figure}[h]
\centering
\def\svgwidth{.7\columnwidth}
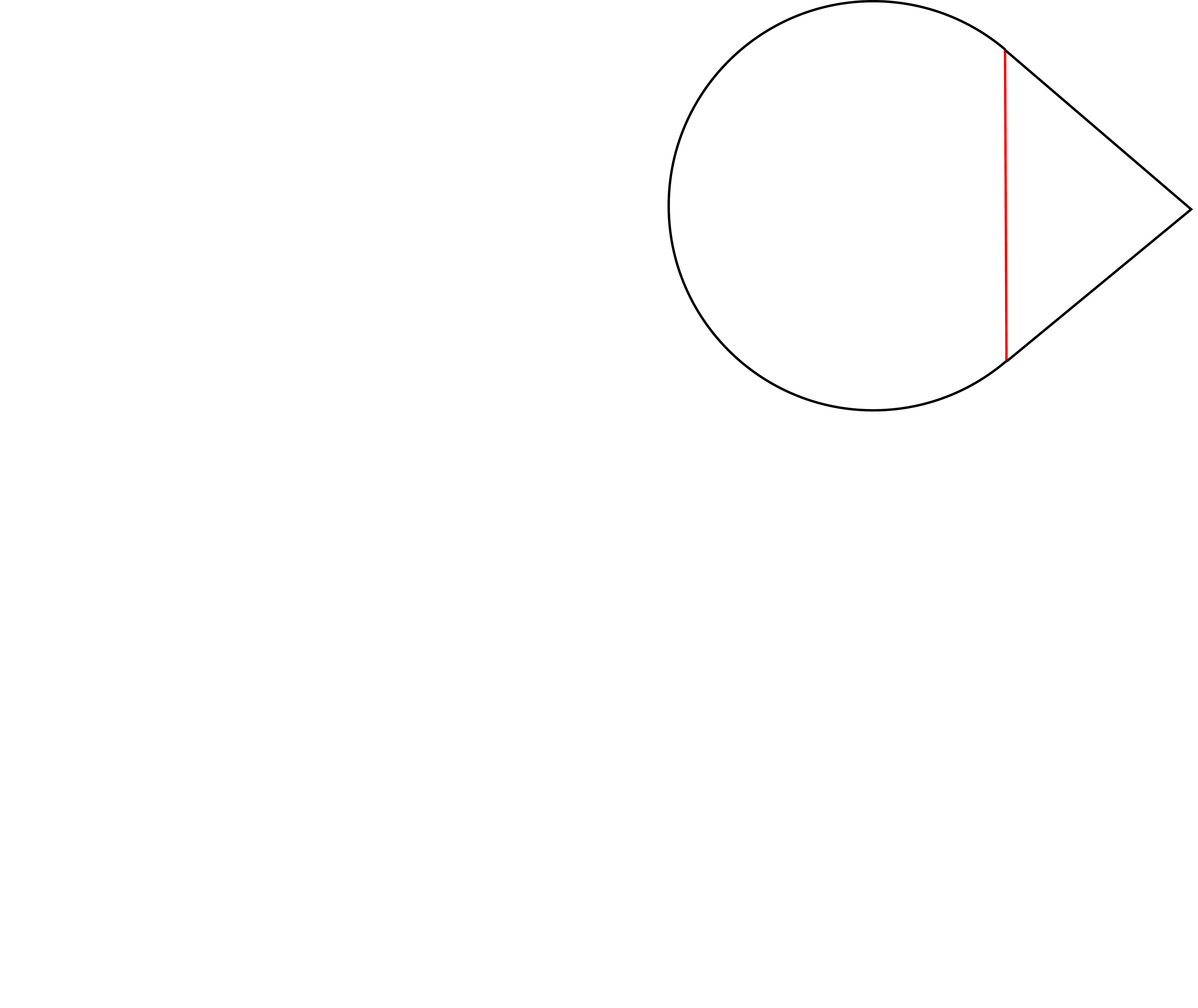
    \caption{The four steps involved in the relative strict hyperbolization procedure.}
    \label{fig:rel hyp}
\end{figure}

Let $K$ be a simplicial complex, and let $L$ be a (not necessarily connected) subcomplex of $K$.
Let $P$ be the simplicial complex obtained by coning-off the path-components of $L$, see Figure~\ref{fig:rel hyp}.
(If $L$ has more than one path-component, one can interpret this as ``attach a cone to each path-component individually'', or as ``attach a single cone point connecting to all the path-components simultaneously''. As explained in \cite[Remark 4.2]{BE07} this choice is irrelevant.)
Let $\hyperbolize P$ be the result of applying strict hyperbolization to $P$ (see \S\ref{sec:strict hyperbolization} and Figure~\ref{fig:hyp}).
Notice that $\hyperbolize P$ has some special point(s) corresponding to the cone point(s) of $P$.
Since hyperbolization preserves links (see \eqref{item:link} above), the link of such point(s) is isomorphic to the link of the corresponding point(s) of $P$, i.e. to the path-components of $L$.
Finally, let $R_K$ be the result of chopping off the cone point(s) of $\hyperbolize P$.
More formally, let $O$ be a small open neighborhood of the cone point(s), and let $R_K = \hyperbolize P \setminus O$.
We denote by $R_L$ the (topological) boundary of $R_K$ in $\hyperbolize L$.

The pair $(R_K,R_L)$ will be called the \textit{relative strict hyperbolization} of the pair $(K,L)$.
The procedure $(K,L)\mapsto (R_K,R_L)$ enjoys many properties, and we list the ones relevant for us.

\begin{enumerate}
    \item If $K$ is a smoothly triangulated smooth manifold with boundary $L$, then $R_K$ is a smooth manifold with boundary $R_L$, and $R_L$ is a subdivision of $L$.
    
    \item If $K$ is compact, then $\pi_1(R_K)$ is a non-elementary relatively hyperbolic group, relative to $\pi_1(L_i)$, where $L_i$ are the path-components of $R_L$ (see \cite[Theorem 1.1]{BE07}). 
    
    \item $R_K$ is aspherical if and only if each path-component of $R_L$ is aspherical.
    
\end{enumerate}

We note in particular that the natural metric on $\hyperbolize P$ is locally $\cat{-1}$, but the one on $R_K$ is not.
Moreover, it is known that $\pi_1(R_K)$ retracts to many free subgroups and does not have property (T) by \cite[Lemma 4.3]{BE07}.

\subsection{The main construction}\label{sec:main}
Throughout \S\ref{sec:main}, let $m\geq 4, n=m+1$ and let $M$ be a closed simply connected smooth $m$-manifold which does not admit an immersion into $\rr^n=\rr^{m+1}$.
Concrete examples are given by $M=\cp 2$ for $m=4$ and $M=\wu$ for $m=5$. For details and examples in higher dimension see \S~\ref{sec:examples}. 

\begin{proof}[Proof of Theorem~\ref{maintheorem}]

We provide a proof in the following three steps.

\subsubsection*{Construction of the manifold $N$}

Let $K$ be a smooth triangulation of $M\times [0,1]$.
Let $L=\partial K$. Note that each of the two path-components of $L$ is a copy of $M$.
Let $(R_K,R_L)$ be the result of applying relative strict hyperbolization to the pair $(K,L)$ (see \S\ref{sec:relative hyperbolization}).
Then $R_K$ is a compact  smooth manifold of dimension $n=m+1$ with boundary $R_L$.
Note that each of the two path-components of $R_L$ is a copy of $M$.
Let $N$ be the closed smooth $n$-manifold obtained by gluing the two boundary components of $R_K$ together.

\subsubsection*{Proof that $\pi_1(N)$ is hyperbolic}
By \cite[Theorem 1.1]{BE07} the fundamental group $\pi_1(R_K)$ is a non-elementary relatively hyperbolic group, which is hyperbolic relatively to the fundamental groups of the two boundary components.
These are copies of $M$, hence they are simply connected.
Therefore $\pi_1(R_K)$ is actually a non-elementary Gromov hyperbolic group.
Note that  $\pi_1(N) = \pi_1(R_K)\ast \zz$, hence $\pi_1(N)$ is a non-elementary Gromov hyperbolic group too.

\subsubsection*{Proof that $N$ does not admit real projective or flat conformal structures}
By construction $N$ is a smooth manifold of dimension $n=m+1$ and it contains a smooth $m$-submanifold $M$  which is simply connected and does not admit any immersion into $\rr^n=\rr^{m+1}$.
(This comes from the two boundary copies of $M$ that have been glued together.)
By Proposition~\ref{prop:projective obstruction},  $N$ does not admit any real projective structure.
By Proposition~\ref{prop:conformal obstruction},   $N$ does not admit any flat conformal structure.

\end{proof}

\begin{remark}\label{rem:dimension}
In the above construction, we used a closed simply connected $m$--manifold that does not immerse in $\rr^{m+1}$. 
This is not available in dimension $m= 3$, because such a manifold is a sphere by the solution to the Poincar\'e conjecture, and the $3$-sphere embeds in $\rr^4$. (Even before the solution to this conjecture, it was already known that any compact $3$-manifolds embeds in $\rr^4$ thanks to \cite[Theorem 6.7]{HI59}.)
This is why we started by assuming $m\geq 4$.
\end{remark}

\begin{remark}\label{rem:not strict but rel}
A manifold $N$ as in Theorem~\ref{maintheorem}  cannot be obtained  by strict hyperbolization (e.g. on the manifolds obtained in \cite{C19}). 
For instance, hyperbolized manifolds are aspherical, but $N$ is not aspherical, because $M$ is not aspherical.
To find an aspherical example, one could try to run the above construction starting with an aspherical $M$.
However, note that the lift $\widetilde M$ of $M$ to the universal cover $\widetilde N$ of $N$ would be a contractible submanifold.
Since a contractible manifold is in particular parallelizable,  by \cite[Theorem 6.3]{HI59} $\widetilde M$ actually admits an immersion into a Euclidean space of one dimension higher, so there is no way to use the obstruction from \S\ref{sec:obstruction}.
The point of using relative strict hyperbolization relative to a simply connected $M$ is exactly to preserve its topology. 
\end{remark}

\begin{remark}
The reader only interested in obstructing the existence of flat conformal structures should notice that manifolds with flat conformal structures have zero Pontryagin classes (see \cite{AV70}). 
In particular, Ontaneda's Riemannian hyperbolization provides examples of aspherical manifolds without flat conformal structures in any dimension $n\geq 4$ (see \cite{O20}.)
\end{remark}


\subsection{Examples}\label{sec:examples}

Let $M$ be a closed simply connected smooth $m$-manifold.
We are interested in the case in which $M$ does not admit a smooth immersion in $\rr^{m+1}$. By Remark~\ref{rem:dimension}, we assume $m\geq 4$.
Following \cite[\S4]{MS74}, we can use Stiefel-Whitney classes to obstruct the existence of smooth immersions in a Euclidean space.
Let us denote by $w_i(M)\in H^i(M,\zz_2)$ the $i$-th Stiefel-Whitney class of  $M$, and by $\overline w_i(M)$ the $i$-th  dual Stiefel-Whitney class.
The following statement is easily deduced from \cite[\S 4]{MS74}. We include a proof for the convenience of the reader.

\begin{lemma}\label{lem:swclass}
Let $M$ be a simply connected smooth $m$-manifold.
If $M$ admits a smooth immersion into $\rr^{m+1}$ then $w_2(M)=0$.
\end{lemma}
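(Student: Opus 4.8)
The plan is to use the dual Stiefel-Whitney classes and their behavior under immersions into Euclidean space. The key input from Milnor-Stasheff is the Whitney duality theorem: if $M$ immerses in $\rr^{m+k}$ with normal bundle $\nu$ of rank $k$, then $w(M) \cup w(\nu) = 1$, which is exactly the defining relation for the dual classes $\overline w_i(M)$. Consequently $w_i(\nu) = \overline w_i(M)$, and since $\nu$ has rank $k = 1$ in our situation, its Stiefel-Whitney classes vanish above degree $1$. Thus the immersion hypothesis forces $\overline w_i(M) = 0$ for all $i \geq 2$.

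First I would specialize to $k=1$: an immersion of $M$ into $\rr^{m+1}$ has a normal line bundle $\nu$, so $w_i(\nu) = 0$ for $i \geq 2$, giving $\overline w_2(M) = w_2(\nu) = 0$. Next I would extract $w_2(M)$ from the duality relation $w(M)\cup \overline w(M) = 1$. In degree $2$ this reads
\begin{equation*}
w_2(M) + w_1(M)\cup \overline w_1(M) + \overline w_2(M) = 0.
\end{equation*}
Since $M$ is simply connected, $H^1(M,\zz_2) = 0$, so $w_1(M) = 0$ and $\overline w_1(M) = 0$. Combined with $\overline w_2(M) = 0$ from the previous step, this collapses the relation to $w_2(M) = 0$, which is the desired conclusion.

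The main obstacle is getting the low-degree bookkeeping of the total duality relation exactly right and invoking simple connectivity at the correct spot. The crucial simplification is that $\pi_1(M) = 0$ kills all the first cohomology, so the cross term $w_1(M)\cup \overline w_1(M)$ vanishes; without this the argument would only control the combination $w_2(M) + w_1\cup\overline w_1$ rather than $w_2(M)$ alone. I would therefore be careful to state the duality relation in total form, truncate it in degree $2$, and then apply the vanishing of $H^1(M,\zz_2)$ to both $w_1(M)$ and $\overline w_1(M)$ before concluding. The rank-one constraint on the normal bundle is what makes $\overline w_2(M)$ vanish, and the whole argument really only uses the codimension-one hypothesis together with simple connectivity.
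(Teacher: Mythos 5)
Your proof is correct and follows essentially the same route as the paper: identify the dual classes with the Stiefel--Whitney classes of the rank-one normal bundle via Whitney duality to get $\overline w_2(M)=0$, then expand the duality relation in degree $2$ and use simple connectivity to kill the $w_1$ terms, leaving $w_2(M)=0$. The only cosmetic difference is that the paper substitutes $\overline w_1 = w_1$ first and writes $\overline w_2(M)=w_1^2(M)+w_2(M)$, whereas you keep the cross term $w_1(M)\cup\overline w_1(M)$ and kill both factors by simple connectivity; these are the same computation.
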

\begin{proof}
Suppose that $M$ admits a smooth immersion into $\rr^{m+1}$.
The normal bundle for the immersion has rank $1$, so all of its classes vanish above degree $1$.
Whitney duality theorem \cite[Lemma 4.2]{MS74} implies that the dual classes of $M$ vanish in the same range. 
In particular, we have that  $\overline{w}_2(M)=0$.
As in the proof of \cite[Lemma 4.1]{MS74}, we have $\overline{w}_2(M)=w_1^2(M)+w_2(M)$.
But  we also have $w_1(M)=0$, because $M$ is simply connected.
Therefore $w_2(M)=\overline{w}_2(M)=0$.
\end{proof}

We propose some well-known examples of closed simply connected smooth manifold with non-zero second Stiefel-Whitney class. 

\begin{example}[$4$-dimensional example]\label{ex:cp2}
Let $M=\cp 2$ be the $2$-dimensional complex projective space. 
By \cite[Corollary 11.15]{MS74} we have that $w_2(\cp 2) = \binom{3}{2} a = 3a = a$ where $a\in H^2(\cp 2,\zz_2)=\zz_2 $ is a generator. In particular, $w_2(\cp 2)\neq 0$.
\end{example}

\begin{example}[$5$-dimensional example]\label{ex:wu}
The homogeneous space $M=\wu$ is a closed simply connected smooth manifold of dimension $5$.
It is known that $w_2(M)\neq 0$ (see for instance \cite{BH58}, or \cite[Lemma 1.1(v)]{BA65}, where $M$ is identified with the Wu manifold and denoted  $X_{-1}$).
\end{example}

A way to get examples of higher dimension is to consider suitable homogeneous spaces (see \cite{BH58}), or to just take  products with spheres.
In particular, combining Examples~\ref{ex:cp2} and \ref{ex:wu} with the following lemma one can obtain manifolds of any dimension  $m\geq 6$ with the desired properties.

\begin{lemma}\label{lem:swclass_product_sphere}
Let $d\geq 2$.
If $M$ is a closed simply connected smooth manifold of dimension $k\geq 4$ with $w_2(M)\neq 0$, 
then $M\times S^d$ is a closed simply connected smooth manifold of dimension $m=k+d$ that does not admit a smooth immersion in $\rr^{m+1}$.
\end{lemma}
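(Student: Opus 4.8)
The plan is to reduce the statement to the immersion criterion of Lemma~\ref{lem:swclass}, applied to the product $M\times S^d$. So first I would verify the easy topological bookkeeping: $M\times S^d$ is a closed smooth manifold of dimension $m=k+d$, and it is simply connected because $\pi_1(M\times S^d)=\pi_1(M)\times\pi_1(S^d)$, with $\pi_1(M)=1$ by hypothesis and $\pi_1(S^d)=1$ since $d\geq 2$. Moreover $m=k+d\geq 4$, so Lemma~\ref{lem:swclass} applies to $M\times S^d$. By the contrapositive of that lemma, it then suffices to prove that $w_2(M\times S^d)\neq 0$: this single cohomological fact immediately yields the non-existence of a smooth immersion into $\rr^{m+1}$.

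The core of the argument is a Stiefel--Whitney class computation. Let $p\colon M\times S^d\to M$ and $q\colon M\times S^d\to S^d$ be the two projections. The tangent bundle splits as $T(M\times S^d)\cong p^*TM\oplus q^*TS^d$, so by the Whitney product formula $w(M\times S^d)=p^*w(M)\cup q^*w(S^d)$. The key observation is that $w(S^d)=1$: indeed $S^d$ embeds in $\rr^{d+1}$ with trivial normal line bundle, so $TS^d$ is stably trivial and all its positive-degree Stiefel--Whitney classes vanish. Extracting the degree-$2$ part of the product formula and using $w_1(S^d)=w_2(S^d)=0$ together with $w_0=1$, I obtain $w_2(M\times S^d)=p^*w_2(M)$.

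It remains to see that this pullback is nonzero. By the Künneth theorem with $\zz_2$ coefficients, $H^*(M\times S^d;\zz_2)\cong H^*(M;\zz_2)\otimes H^*(S^d;\zz_2)$, and under this identification $p^*$ is the injective map $x\mapsto x\otimes 1$. Since $w_2(M)\neq 0$ by hypothesis, its image $p^*w_2(M)=w_2(M)\otimes 1$ is nonzero, hence $w_2(M\times S^d)\neq 0$, completing the proof. The argument is essentially routine; the only points requiring care are recording that $w(S^d)$ is trivial (so that no spurious cross-term $p^*w_1(M)\cup q^*w_1(S^d)$ or $q^*w_2(S^d)$ survives) and that $p^*$ is injective in degree $2$, which is where the simple connectivity of $S^d$ and the Künneth splitting do the work.
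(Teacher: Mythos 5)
Your proof is correct and takes essentially the same route as the paper: split off the degree-$2$ term of the Whitney product formula, use $w_1(S^d)=w_2(S^d)=0$ for $d\geq 2$ to get $w_2(M\times S^d)=w_2(M)\neq 0$, and conclude via Lemma~\ref{lem:swclass}. The only difference is that you explicitly verify that $p^*$ is injective via the K\"unneth theorem, a point the paper leaves implicit when identifying $w_2(M)$ with its pullback to the product.
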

\begin{proof}
Clearly, $M\times S^d$ is a closed simply connected smooth manifold because both $M$ and $S^d$ are. 
It follows from Whitney product formula (see \cite[\S, p. 37-38]{MS74}) that
$$w_2(M\times S^d)=w_2(M) + w_1(M)w_1(S^d) + w_2(S^d).$$
Since for $d\geq 2$ we have that $w_1(S^d)=w_2(S^d)=0$, it follows that $w_2(M\times S^d)=w_2(M)\neq 0$.
By Lemma~\ref{lem:swclass} we can conclude that $M\times S^d$ does not admit any smooth immersion in $\rr^{k+d+1}=\rr^{m+1}$.
\end{proof}


\printbibliography
 
\end{document}